\newtheorem{theorem}{Theorem}
\theoremstyle{definition}
\newtheorem*{definition*}{Definition}
\newtheorem*{example*}{Example}
\newtheorem*{remark*}{Remark}
\newtheorem*{question*}{Question}
\newtheorem*{problem*}{Problem}
\newtheorem*{note*}{Note}
\newtheorem*{claim*}{Claim}
\newtheorem*{fedex*}{FedEx Problem}
\newtheorem*{altfedex*}{Alternative FedEx Problem}
\newtheorem*{fredex*}{The FredEx Problem}
\newcommand{\R}{\mathbb{R}}
\newcommand{\ra}{\rightarrow}
\newcommand{\xset}{x_1,x_2,\ldots,x_k}
\title{\textbf{The FedEx Problem} \thanks{\emph{College Math. J.} \textbf{41} (2010), no. 3,  222--232}}
\author{Kent E. Morrison \thanks
    {American Institute of Mathematics,
    360 Portage Avenue, 
    Palo Alto, CA 94306}   
}
\date{}
\begin{document}

\maketitle

\large
\renewcommand{\baselinestretch}{1.2}   
\normalsize

In April of 1973 a small company, Federal Express, began package delivery operations by flying 186 packages to 25 cities in the U.S. from Memphis. In order to deliver small packages quickly the company used a novel strategy in the shipping business. Every day packages picked up from all over the country were flown to Memphis where they were sorted and then flown to their destinations for delivery the next day. The strategy has been extremely successful and has been adopted by other shippers such as UPS, which established a shipping hub in Louisville. Meanwhile, Federal Express has grown into a large company, now known as FedEx, with worldwide operations.  

According to the company website \cite{FedEx09},  Memphis was 
\begin{quote}
selected for its geographical center to the original target market cities for small packages. In addition, the Memphis weather was excellent and rarely caused closures at Memphis International Airport. The airport was also willing to make the necessary improvements for the operation and had additional hangar space readily available.
\end{quote}

It is the question of the ``geographical center'' that is the focus of this article. Suppose that FedEx were choosing its hub today with the assumption that anyone in the U.S. is equally likely to ship a package to anyone else in the country. The goal is to minimize the average distance the package must be shipped. Since we are assuming that the sender's and receiver's locations are independent and identically distributed, the average distance that a package travels is twice the average distance from the sender to the hub location, and so the optimal location is a point located with minimal average distance to the population. This is equivalent to a point that minimizes the sum of all the distances to the members of the population. Such a point we will call a \emph{hub} for the population. By the term \emph{FedEx problem}, we mean the related questions of the existence, uniqueness, and determination of hubs.

In the first section we deal with the FedEx problem for a population in $\R^n$ using the Euclidean distance, and in section 2 we actually find the hub for the U.S. population based on the data from the 2000 census and with the distance between points measured along great circles. In the last section we consider the FedEx problem in more general metric spaces and with more probability distributions.

\section*{The Euclidean FedEx Problem}

The Euclidean FedEx problem in one dimension has a well-known solution: a hub for the population is any median.  Here is a quick proof. For a population located at the points $x_1\leq x_2 \leq \cdots \leq x_k$, a hub must be between $x_1$ and $x_k$, for otherwise all the distances to the $x_i$ could be decreased by moving the potential hub toward the points. As far as $x_1$ and $x_k$ are concerned, any point between them is equally optimal because the sum of the distances to those two points is constant. Thus, we can eliminate $x_1$ and $x_k$ from the population and reduce the problem to a smaller population. We continue to eliminate the endpoints until there is either one or two left. If there is one point left (corresponding to $k$ odd), then that point is the unique hub, and if there are two points left (corresponding to $k$ even), then any point between them is a hub. The results in either case are medians for the population locations. 

In higher dimensions the story starts with Fermat, who generally receives credit for first posing the following problem, which we have updated and renamed for the twenty-first century.
 \begin{fredex*}
Fred has been married and divorced three times. He has three children, one from each marriage, and the children live with Fred's ex-wives. Each weekend Fred visits one of his children in a regular rotation. Where should he live in order to minimize the distance he travels?
Let the locations of the residences be $a, b, c \in \R^2$. We are assuming that they are near enough so that we do not have to take into account the curvature of the Earth and that the metric is Euclidean distance. The mathematical problem is to find the point $h$ in $\R^2$ that minimizes
 \[ f(x)=  \Vert x-a\Vert + \Vert x-b\Vert + \Vert x-c\Vert. \] 
\end{fredex*}
Torricelli gave the first solution to Fermat's problem and eventually offered several different proofs. 
The solution, to be discussed in more detail following the proof of Theorem 1, has two cases. First, if the angles of the triangle $abc$ are less than $2\pi/3$, then $h$ is the point within the triangle such that the lines from $h$ to the vertices form three equal angles of size $2 \pi/3$. Second, if some angle is larger than $2 \pi/3$, then $h$ is the vertex of that angle. Note that the solution can be constructed by compass and straightedge.

\begin{figure}
\centerline {
\includegraphics[width=5in]{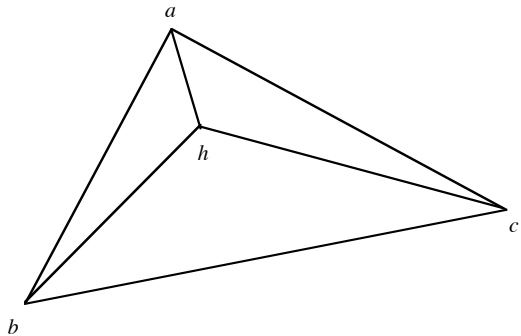}  
}
\vspace{-30mm}
\caption{Solution of the FredEx problem with $h$ inside the triangle.}
\end{figure}

Since then the problem of minimizing total or average distance has arisen repeatedly and in different contexts so that several different names are attached. In addition to being called the Fermat-Torricelli problem, it is known as the Weber problem---named for the economist Alfred Weber who was interested in the problem in connection with the location of industries. In the sub-area of operations research known as location science, the problem is called the \emph{median problem} or the \emph{single facility location problem}. Statisticians may refer to the minimal point as the spatial, multivariate, or multidimensional median. For a survey of the problem, its generalizations, and its history see the article by 
Wesolowsky \cite{ Wesolowsky93} with its extensive bibliography or the papers in the collection edited by Drezner and Hamacher \cite{DreznerHamacher02}.

For the general FedEx problem in $n$-dimensional Euclidean space, we consider a population of size $k$ located at the points $x_1,x_2,\ldots,x_k \in \R^n$, which are not necessarily distinct. The function to be minimized is the sum of the distances from the $x_i$ to a variable point $x \in \R^n$
\[ f(x) = \sum_i \|x-x_i\|. \]
Just as in one dimension where the hub is located between the extremes, a hub in higher dimensions should be located ``between'' the points, which means, as we will show in Theorem 1, that the hub lies in the convex hull of the points (i.e., the smallest convex set containing them). Recall that a subset of $\R^n$ is convex if, for any two points in the set, the points on the line segment between them are also in the set. The convex hull of $\xset$ can be seen as the image of the compact set $\{(\alpha_1,\ldots,\alpha_k) | \alpha_i \geq 0, \, \sum \alpha_i =1\}$ under the continuous map sending $(\alpha_1,\ldots,\alpha_k)$ to  $\sum \alpha_i x_i$. In particular, the convex hull is compact.

The function $f: \R^n \ra \R$ is continuous everywhere and differentiable at all $x \in \R^n$ except the points $x_i$. At a point $x$ not equal to any of the $x_i$ the derivative $Df(x)$ is the linear map from $\R^n$ to $\R$ defined by 
\begin{equation} \label{Df} Df(x)(v)=
\sum_i \left\langle \frac{x-x_i}{\|x-x_i\|}, v \right\rangle 
\end{equation}

We now have the ingredients to prove the existence and uniqueness of a hub for a finite set of non-collinear points in $\R^n$. Although this result has probably been proved again and again, we cannot find it in the literature clearly stated with a complete proof. A brief article by Haldane \cite{Haldane48} contains the result for $\R^2$ without mention of the convex hull.
\begin{theorem}[Existence and Uniqueness of a Hub] \label{exuniq}
For any non-collinear points $x_1,x_2,\ldots,x_k$ in $\R^n$ there is a unique hub contained in the convex hull of the points.
\end{theorem}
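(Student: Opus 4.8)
The plan is to split the statement into existence and uniqueness, using the convexity of $f$ throughout. Existence will follow from the compactness of the convex hull together with a projection argument showing that a minimizer can never lie outside the hull; uniqueness will follow from \emph{strict} convexity of $f$, and this is precisely the step where the non-collinearity hypothesis is consumed.

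For existence, let $C$ denote the convex hull of $x_1,\ldots,x_k$, which (as recorded in the excerpt) is compact. Since $f$ is continuous, it attains a minimum over $C$ at some point $h$. I would then rule out any competitor outside $C$. Given $x\notin C$, let $p$ be the nearest-point projection of $x$ onto the closed convex set $C$; the defining variational inequality $\langle x-p,\,y-p\rangle\le 0$ for all $y\in C$ yields, for each $x_i\in C$,
\[ \|x-x_i\|^2=\|x-p\|^2+\|p-x_i\|^2-2\langle x-p,\,x_i-p\rangle\ge\|x-p\|^2+\|p-x_i\|^2. \]
Because $x\ne p$ we have $\|x-p\|^2>0$, so $\|x-x_i\|>\|p-x_i\|$ for every $i$, and summing gives $f(x)>f(p)\ge f(h)$. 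Hence no point outside $C$ can minimize $f$, and $h\in C$ is a global hub.

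For uniqueness, each summand $\|x-x_i\|$ is convex, so $f$ is convex; the task is to upgrade this to strict convexity. Fix distinct $p,q$ and $\lambda\in(0,1)$, and write $\lambda p+(1-\lambda)q-x_i=\lambda(p-x_i)+(1-\lambda)(q-x_i)$. Convexity of the norm gives $\|\lambda p+(1-\lambda)q-x_i\|\le\lambda\|p-x_i\|+(1-\lambda)\|q-x_i\|$, with equality only when $p-x_i$ and $q-x_i$ are nonnegative multiples of one another, that is, only when $x_i$ lies on the line through $p$ and $q$. Since the points are non-collinear, no single line contains all of them, so for any distinct $p,q$ at least one index gives strict inequality, whence $f(\lambda p+(1-\lambda)q)<\lambda f(p)+(1-\lambda)f(q)$. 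A strictly convex function has at most one minimizer: if $p\ne q$ both achieved the minimum value $M$, the midpoint would satisfy $f<M$, a contradiction.

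The delicate part is the equality analysis in the norm inequality and verifying that non-collinearity is exactly what forces strict inequality in at least one term; this is the crux that converts mere convexity into the strict convexity needed for uniqueness. The gradient formula \eqref{Df} offers an alternative route, characterizing an interior minimizer by $\sum_i (x-x_i)/\|x-x_i\|=0$, but the convexity argument is cleaner and sidesteps the non-differentiability of $f$ at the $x_i$, so I would keep the latter in reserve for the later discussion of Torricelli's construction rather than use it here.
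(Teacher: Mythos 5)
Your proof is correct, and the uniqueness half is essentially the paper's own argument: both establish strict convexity of $f$ via the equality case of the triangle inequality, observing that equality in the $i$-th term forces $x_i$ onto the line through the two competing points, so non-collinearity guarantees at least one strict term. Where you genuinely diverge is the existence step, i.e., ruling out minimizers outside the convex hull. The paper takes a separating hyperplane $H$ between $x$ and the hull with unit normal $n$, feeds $n$ into the derivative formula \eqref{Df} to get $Df(x)(n)>0$, and concludes that $f$ decreases locally in the direction $-n$, so $x$ is not even a local minimum. Your argument instead uses the nearest-point projection $p$ of $x$ onto the hull and the variational inequality $\langle x-p,\,y-p\rangle\le 0$ to show the stronger, termwise statement that $\|x-x_i\|>\|p-x_i\|$ for \emph{every} $i$, hence $f(x)>f(p)$ outright. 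This buys you two things: the comparison is global rather than infinitesimal (you exhibit an explicit better point in the hull, not merely a descent direction), and you avoid invoking differentiability of $f$ altogether, which keeps the whole proof calculus-free. The paper's route has the compensating virtue of setting up the gradient formula that it reuses immediately afterward to characterize the hub by $\sum_i (h-x_i)/\|h-x_i\|=0$ in the Torricelli discussion --- which is exactly the trade-off you flag at the end of your write-up. One small wording point: equality in the triangle inequality means $p-x_i$ and $q-x_i$ are nonnegative multiples of one another, which is strictly stronger than $x_i$ lying on the line through $p$ and $q$; since you only use the implication from equality to collinearity, the argument is unaffected, but the ``that is'' overstates the equivalence.
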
 
\begin{proof} 
Let $Z$ be the convex hull of $\xset$ and let $x$ be a point not in $Z$. There is a separating hyperplane $H$ between $x$ and $Z$. (See, for example, the Basic Separation Theorem in \cite[p. 158]{PSU93}). Let $n$ be the unit normal to $H$ pointing toward $x$. Thus, $\langle x,n \rangle > \langle z,n \rangle$  for all $z \in Z$. In particular, $\langle x,n \rangle > \langle x_i,n \rangle $ for $i=1,\ldots,n$. Then it follows that $Df(x)(n) >0$, and so $f$ decreases from $x$ in the direction $-n$. More precisely, for some $\epsilon > 0$, $f(x-\epsilon n) < f(x)$. Therefore, a minimum of $f | Z$, which exists because $f$ is continuous and $Z$ is compact, will actually be a global minimum of $f$.

Next we show that the function $f$ is strictly convex. This means that for all $x, y \in \R^n$ and any $t$ in the open interval $(0,1)$,
\[  f(t x + (1-t) y) < t f(x) + (1-t)f(y) .\] We have
\begin{align*}
  f(t x + (1-t) y) &= \sum_i \| tx + (1-t)y-x_i \|  \\
    & =\sum_i \| tx -tx_i +(1-t)y - (1-t)x_i \| \\
     &= \sum_i \| t(x-x_i) +(1-t)(y-x_i) \|   \\
    & \leq \sum_i( \|t(x-x_i) \| +\|(1-t)(y-x_i) \|) \\
     &=t \sum_i \|x-x_i \| + (1-t)\sum_i \|y-x_i \|  \\
     &= t f(x) + (1-t)f(y).
\end{align*}
The triangle inequality in the fourth line will be strict, unless $t(x-x_i)$ and $(1-t)(y-x_i)$ are linearly dependent, which is equivalent to $x,y$ and $x_i$ being collinear. Therefore, if the $x_i$ are not collinear, then there are no $x$ and $y$ simultaneously collinear with all the $x_i$, and it follows that the inequality is strict. Thus, $f$ is strictly convex.

Finally, if there were two distinct minima for $f$, say $x$ and $y$, then $f(x)=f(y)$ and \[f(tx +(1-t)y) < tf(x)+(1-t)f(y) = f(x)\] for all $t \in (0,1)$, contradicting the fact that $x$ is a minimum.  \end{proof}

The hub $h$ of the points $\xset$ must be a critical point of $f$, so that either $h=x_i$ for some $i$, or $Df(h)=0$, which means
 \[ \sum_i\frac{h-x_i}{\|h-x_i\|} = 0. \]
 
Now we return to the case of three points in $\R^2$, which we can assume are not collinear. If the hub is one of the points, then it must be the vertex opposite the longest side of the triangle formed by the three points, because $f(x_i)$ is the sum of the lengths of the two sides meeting at $x_i$, and this is minimal when the two sides are the shorter ones. If the hub is not one of the $x_i$, then it lies within the triangle. Let $u_i$ be the unit vector pointing from $h$ to $x_i$. Then $u_1+u_2+u_3=0$. By rotating the coordinate system we may assume that $u_1=(1,0)$. Then $u_2=(a,b)$ and $u_3=(-1-a,-b)$ with $\|u_2 \|^2=a^2+b^2=1$ and $\|u_3\|^2 = (-1-a)^2+b^2=1$. The last two equations imply that $a=-1/2$ and $b= \pm \sqrt{3}/2$. Thus, the angle between any pair of the vectors $u_1,u_2,u_3$ is $2\pi/3$. 

If one of the angles of the triangle is $2\pi/3$ or greater, then at any other point of the closed triangle, the vectors from it to the other two vertices span an angle greater than $2\pi/3$. Such a point cannot be a hub, and so the hub must be the vertex of the large angle (see Figure 2). 
\begin{figure}
\centerline {
\includegraphics[width=5in]{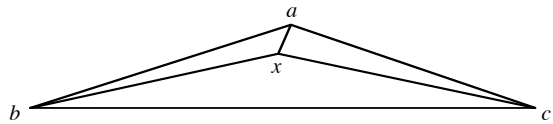}  
}
\vspace{-4cm}
\caption{The solution of the FredEx problem is $h=a$. An interior point $x$ cannot be the hub because $\measuredangle bxc > \measuredangle bac > 2\pi/3.$ }
\end{figure}

If, on the other hand, all the angles are less than $2\pi/3$, then the following continuity argument, illustrated in Figure 3, shows that the hub is an interior point.Let $L_1$, $L_2$, and $L_3$ be three rays issuing from a point $P$ and making equal angles of $2\pi/3$ between them. Place the triangle with one vertex at $P$, one vertex on $L_1$, and the third vertex in the region between $L_1$ and $L_3$. Now continuously slide the triangle so that the vertex originally at $P$ moves out along $L_2$, and the vertex on $L_1$ stays on $L_1$ and moves toward $P$. When the third vertex crosses $L_3$  the vertices of the triangle are on the three rays, and so the point $P$ satisfies the property that the unit vectors from $P$ toward the vertices of the triangle sum to $0$. Therefore,  $P$ is the hub and is an interior point of the triangle. For more on this classical problem we recommend  \cite{Honsberger73} for an extended discussion and \cite{Hajja94} for another proof using calculus. 

\begin{figure}
\centerline{
\includegraphics[width=5in]{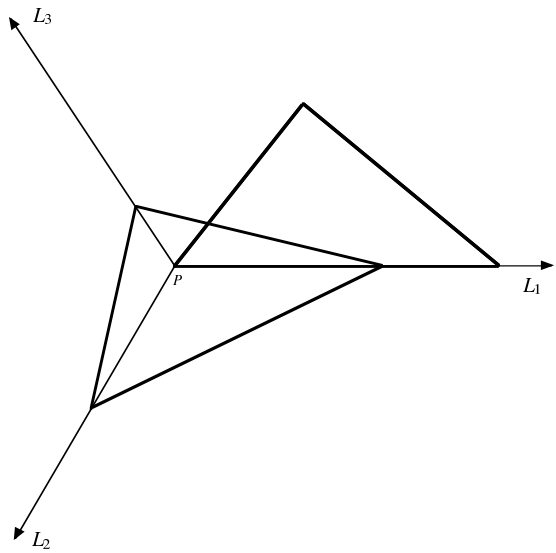}  
}
\vspace{-2cm}
\caption{Beginning with one side on $L_1$ the triangle is moved until the third vertex meets $L_3$. The hub is at $P$.}
\end{figure}

There is also a nice description of the hub for four distinct points in $\R^2$. We assume that the four points are not on a line for otherwise we are in the one-dimensional setting. Again there are two cases; the convex hull is either a quadrilateral or a triangle. In the first case the hub is the intersection of the two diagonals. In the second case one of the four points is in the triangular convex hull of the other three, and that point is the hub. 

To prove this result, first consider the case in which the four points are the vertices of a convex quadrilateral and labeled so that the sequence $x_1,x_2,x_3,x_4$ makes a circuit of the quadrilateral. Let $h$ be the intersection of the diagonals; one diagonal is the line between $x_1$ and $x_3$ and the other is the line between $x_2$ and $x_4$. Let $u_i$ be the unit vector pointing from $h$ to $x_i$. Therefore, $u_1=-u_3$ and $u_2=-u_4$, and so $u_1 +u_2 +u_3+u_4=0$, from which it follows that $h$ is the hub. 

For the second case suppose that $x_4$ is in the triangular convex hull of the other three points. Let $h$ be the hub. Then either $Df(h)=0$ or $h$ is one of the $x_i$. Suppose $Df(h)=0$. Then the unit vectors $u_i$ pointing from $h$ toward the $x_i$ satisfy $u_1 +u_2 +u_3+u_4=0$. We may assume that they are ordered so that they run counter-clockwise around the unit circle. Therefore the points $0, u_1, u_1+u_2, u_1+u_2+u_3$ form a quadrilateral with sides of equal length, and such a quadrilateral must be a rhombus. Thus, $u_1=-u_3$ and $u_2=-u_4$. Then $x_1$ and $x_3$ are on the line through $h$ with direction vector $u_1$, and $x_2$ and $x_4$ are on the line through $h$ with direction vector $u_2$. This means that the $x_i$ are actually the vertices of a convex quadrilateral with $h$ as the intersection of the diagonals. This contradicts our assumption that the convex hull is a triangle. It follows that $h$ must be one of the $x_i$. Finally, we claim that if $x_4 \neq x_i$, then $f(x_4) < f(x_i)$. Consider $i=1$, since the other two are similar. The inequality $f(x_4) < f(x_1)$ is equivalent to
\[  \|x_4-x_2\| +\|x_4-x_3\| < \| x_1-x_2 \| + \|x_1-x_3\| ,\]
which is clear from the picture in Figure 4 (proof left to the reader). Therefore, the hub is $x_4$, the point that is in the convex hull of the other three points. Note that $x_4$ may be on the boundary of the triangle.

\begin{figure}
\vspace{-2cm}
\centerline {
\includegraphics[width=6in]{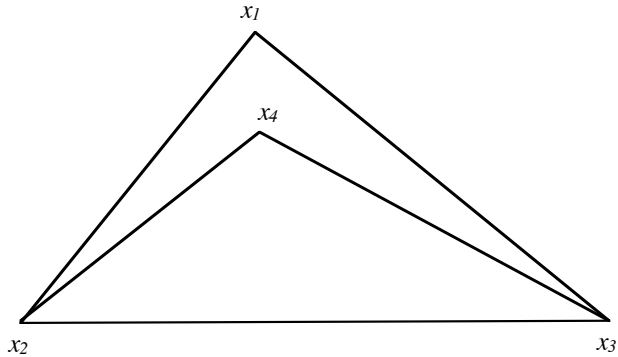}  
}
\vspace{-4cm}
\caption{Proof that $f(x_4)<f(x_1)$.}
\end{figure}

The ``Varignon frame'' is a mechanical device invented by Pierre Varignon (1654--1722) for finding the hub of points $x_1,\ldots,x_k$ in $\R^2$. On a flat piece of wood mark the locations of the points and drill a hole at each point. For each hole take a piece of string and attach a weight to one end--all weights the same. Put the string through the hole with the weight below the board and tie all of the loose ends  together. Hold the board level and above the ground so that the weights can hang freely. The knot tying all the strings together will move to the location of the hub. 

\section*{The U.S. Population Hub}
 
Let's return to the original FedEx Problem of where to establish a shipping hub. Every ten years the U.S. Census Bureau calculates a point called the ``center of population.'' Could this point be the hub we are looking for? According to the current Bureau website \cite{Census01} the center of population is

\begin{quote}
the point at which an imaginary, flat, weightless, and rigid map of the United States would balance perfectly if weights of identical value were placed on it so that each weight represented the location of one person on the date of the census.
\end{quote}

This clearly is not what we want in a hub. The U.S. is large enough so that the curvature matters in measuring distance, and even if we could treat the U.S. area as flat, this population center is the center of gravity for the population distribution, and, thus in effect it is the point minimizing the average squared distances to the population, or, equivalently, the point minimizing the aggregate squared distance to all the people in the country. However, it is an often held misconception that the center of gravity or centroid minimizes the average distance. Even the Census Bureau suffered from this confusion as witnessed by this passage from its Bulletin of the 1920 census (quoted in \cite[p.34]{Eells30}):

\begin{quote}
If all the people in the United States were to be assembled at one place, the center of population would be the point which they could reach with the minimum aggregate travel, assuming that they all travelled in direct lines from their residence to the meeting place.
\end{quote}
After correspondence in 1926 between the Census Bureau, the American Statistical Association, and a group of interested people, the Census Bureau fixed the problem by taking out references to the minimum aggregate travel \cite{Editor30}.

Although the population center of the Census Bureau is not the hub we are looking for, it is interesting nevertheless to understand how it is calculated. This description comes from \cite{Census01} and is the method used for the censuses from 1950 to the most recent one in 2000.
The center of population is given as a pair of numbers $(\bar{\phi},\bar{\lambda})$ representing the center's latitude and the longitude.
  The latitude $\bar{\phi}$ is simply the average latitude of the population:
\[ \bar{\phi} =  \frac{1}{k} \sum_i \phi_i . \]
However, the longitude $\bar{\lambda}$ is \emph{not} the average longitude. Instead it is defined by
 \[  \bar{\lambda}=\frac{\sum_i\lambda_i \cos \phi_i}{\sum_i \cos \phi_i} . \] 
To make some sense of this, notice that the distance from the point with latitude $\phi$ and longitude $\lambda$ to the Greenwich meridian (longitude zero) along the latitude line is $\lambda \cos \phi$. Therefore, the distance to the Greenwich meridian (along lines of constant latitude) averaged over the entire population is $(1/k) \sum_i \lambda_i \cos \phi_i$. This needs to be converted to a longitude value. If we use the average cosine of the latitude of the population in order to convert, then we get the formula of the Census Bureau: 
\[\bar{\lambda} = \frac{\frac{1}{k}\sum_i \lambda_i \cos \phi_i }{\frac{1}{k}\sum_i \cos \phi_i}=
\frac{\sum_i\lambda_i \cos \phi_i}{\sum_i \cos \phi_i} . \] 

The calculation of $\bar{\phi}$ and $\bar{\lambda}$  gives $(\bar{\phi},\bar{\lambda})=(37.7^\circ,91.8^\circ)$, which is a point in Phelps County, Missouri, but there are some difficulties with this definition of the population center. Here is one example. Suppose the population in question consisted of two individuals, one located at $(\phi_1,\lambda_1)=(35^\circ,120^\circ)$, which is near San Luis Obispo, California, and one located at $(\phi_2,\lambda_2)=(35^\circ,80^\circ)$ near Charlotte, North Carolina. It is easy to check that $\bar{\phi}=35^\circ$ and $\bar{\lambda}=100^\circ$, which is the point midway between on the same line of latitude. However, it is difficult to imagine any reasonable definition of population center for this case that would give a point different from the midpoint of the great circle between the two locations, which is $(36.7^\circ, 100^\circ)$. In this case there is quite a discrepancy between the two answers. Both points are in Oklahoma, but they are 116 miles apart.

If the Census Bureau insists that the population center be the center of gravity, then it could be done more accurately by treating the area of the United States as a region on a spherical shell with a unit of mass for each person.
On the other hand the Bureau could return to its definition of 1920 in which the population center is the point of minimum aggregate travel but calculate it correctly. While it may have been a daunting task back then, it is now a calculation that can be done easily with the data provided by the Census Bureau.

\subsection*{Calculating the hub}

On its web pages  \cite{PopCenters00} the Census Bureau provides 2000 census data for the 65,443 census tracts in the 50 states. Each line of the comma-delimited text file contains six numbers. The first three numbers identify the state, county, and census tract. The last three numbers give the population of the tract and the latitude and longitude in degrees of the population center of the tract. Note that the longitude is negative because of the convention that east is the positive direction from Greenwich. Here are a few lines from that file with the commas removed for readability.
\vspace{2mm}

\texttt{
\begin{tabular}{llllll}
State&County&Tract&Pop.&Latitude&Longitude\\  
\hline 
06&077&005404&6511&+37.732419&-121.425296\\
06&077&005500&6876&+37.71513&-121.322774\\
06&079&010000&6803&+35.701192&-120.801674\\
06&079&010100&8787&+35.634833&-120.69533\\
06&079&010201&4687&+35.642344&-120.655632\\
06&079&010202&4180&+35.606719&-120.650357\\
06&079&010203&8069&+35.615338&-120.670017 
\end{tabular} }

\vspace{3mm}
We wrote a program to use this data to compute the function that is the aggregate great circle distance from the entire population to the point with latitude $\phi$ and longitude $\lambda$. Then we minimized this function on a grid with one degree increments. Each evaluation of the function on a grid point took about two seconds, and so in several minutes all points with any chance of being optimal could be checked. We found the minimum to be $(39^\circ,87^\circ)$, a location in Greene County, Indiana, about 70 miles southwest of Indianapolis. From this point the average distance to any person in the country is 795 miles, while the average distance to Memphis is 843 miles. It is interesting to note that as FedEx has grown it has established some secondary hubs, one of them being in Indianapolis. Furthermore, the optimal location is only about 85 miles northwest of Louisville where UPS has established its main hub.
The optimal location is 275 miles from the Census Bureau's population center $(37.7^\circ,91.8^\circ)$ located in Phelps County, Missouri and 315 miles from Memphis $(35^\circ,90^\circ)$. 
\begin{figure}
\vspace{.3in}
\centerline {
\includegraphics[width=5in]{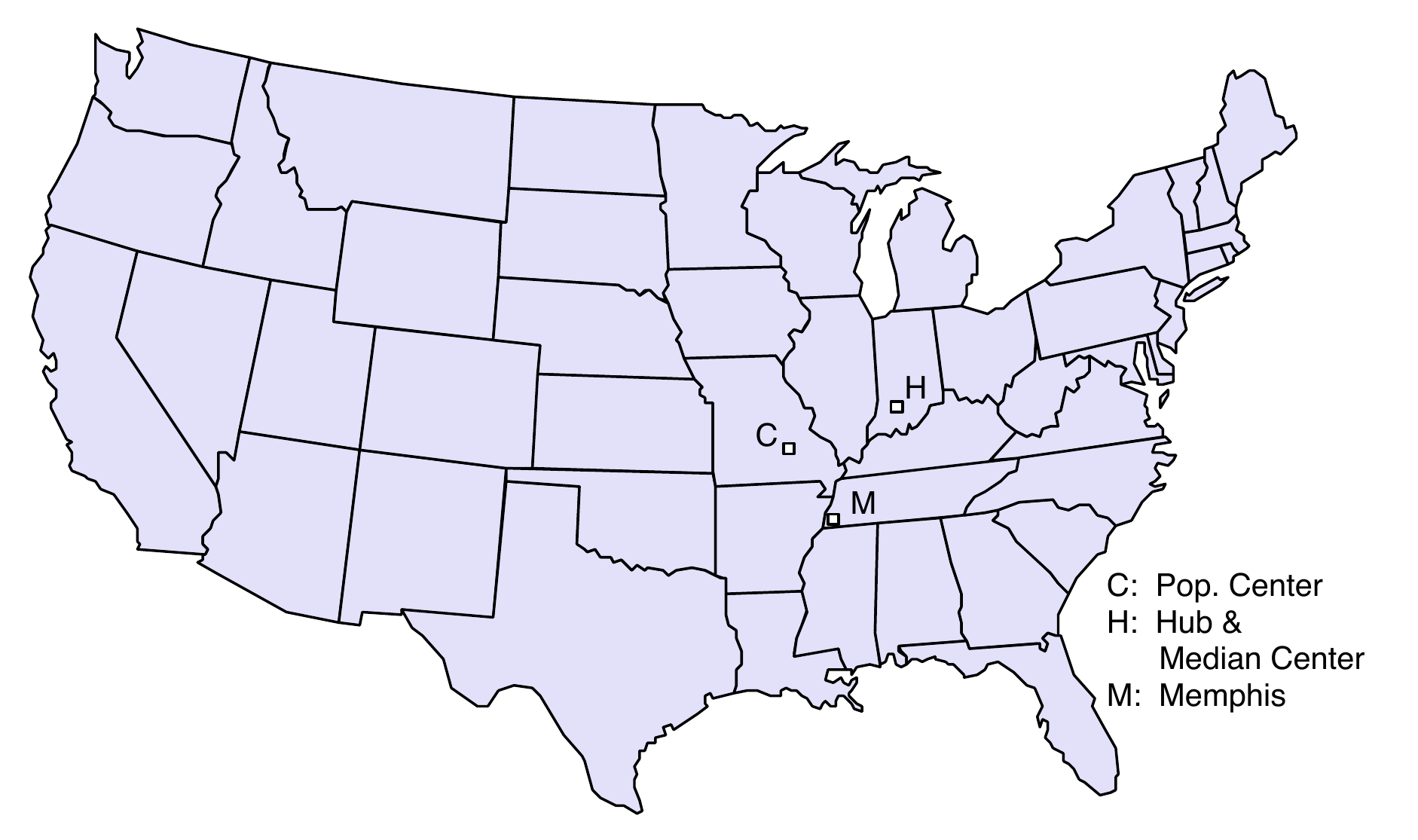}  
}
\caption{Location of the 2000 Census Bureau population center (C), the 2000 U.S. population hub (H), and the FedEx Memphis hub (M). The median center is very close to H.}
\end{figure}
The Census Bureau also computes a point called the ``median center of population'' by finding the median latitude and median longitude for the U.S. population; one half of the population lives north of the median latitude and one half of the population lives east of the median longitude. In the plane, this point minimizes the expected distance using the 1-norm to define distance between $x, y$ by $\| x - y \| = |x_1-y_1| + |x_2-y_2|$, and so the Census Bureau is essentially using the distance defined by the sum of differences in latitude and longitude. In general the optimal points depend on the metric used, but in this case the median center of population when rounded to the nearest degree is the same as the hub we found.  (Their precise answer is $(38.75644^\circ, 86.93074^\circ)$. See \cite{Census01}.) Refer to Figure 5 for a map showing these locations.

\section*{Generalizations and Extensions}
\subsection*{More than four points in the plane}
With five or more points in the plane there is no simple description of the hub. The difficulty stems from the fact that the set $\{(u_1,\ldots,u_k) \, | \, \|u_i\|=1,\sum u_i =0\} \subset (\R^2)^k$ has dimension $k-2$. The one-dimensional orthogonal group $O(2)$ acts on the set by rotations; the quotient space is the space of geometrically distinct configurations and has dimension $k-3$. For $k=3$ there is a unique configuration up to the action of $O(2)$, and for $k=4$ there is a one-dimensional set of geometrically distinct configurations parameterized by the smallest angle between two of the $u_i$. For $k \geq 5$ the dimension of the set of geometrically distinct configurations is two or more, and there are too many possibilities. Figure 6 is a typical example with $k=6$, showing the unit vectors and the associated hexagon that has no obvious symmetry. The hexagon can be deformed continuously into other hexagons with three degrees of freedom. (Challenge to the reader: identify the degrees of freedom.) 

\begin{figure}
\vspace{-.5in}
\centerline { 
\includegraphics[width=4.5in]{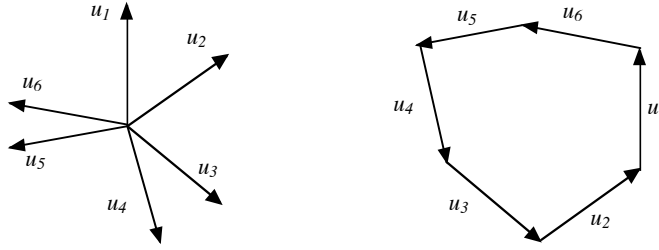}   
}
\vspace{-3cm}
\caption{A configuration of six unit vectors with zero sum.}
\end{figure}
\subsection*{From points to probability measures}
A finite number of points $\xset$ in $\R^n$ can be regarded as a discrete probability measure with each point having probability $1/k$. It is natural to define a hub for a probability measure $\rho$ as a point having minimal expected distance to points distributed according to $\rho$. Then Theorem 1 has the following generalization and can be proved in essentially in the same way. (An even more general result can be found in \cite{MilasevicDucharme87}.)

\begin{theorem}
Let $\rho$ be a probability measure on $\R^n$ with compact support not lying on a line. Then $\rho$ has a unique hub, and this hub is contained in the convex hull of the support. 
\end{theorem}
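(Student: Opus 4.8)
The plan is to follow the proof of Theorem~\ref{exuniq} almost verbatim, replacing the finite sum by an integral and the uniform weight $1/k$ by the measure $\rho$. Set
\[ f(x) = \int_{\R^n} \|x-y\|\,d\rho(y), \]
and let $S$ be the (compact) support of $\rho$ and $Z$ its convex hull. First I would record two soft facts. Because $\bigl|\,\|x-y\|-\|x'-y\|\,\bigr|\le \|x-x'\|$ and $\rho$ is a probability measure, $f$ is $1$-Lipschitz, hence continuous. And $Z$ is compact: by Carath\'eodory's theorem every point of $Z$ is a convex combination of at most $n+1$ points of $S$, so $Z$ is the continuous image of the compact set $\{(\alpha_0,\dots,\alpha_n):\alpha_i\ge 0,\ \sum_i\alpha_i=1\}\times S^{\,n+1}$ under $((\alpha_i),(y_i))\mapsto\sum_i\alpha_i y_i$, and is therefore compact.

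Next I would show that the minimum of $f$ lies in $Z$. For $x\notin Z$ there is a separating hyperplane with unit normal $n$ pointing toward $x$, so $\langle x,n\rangle>\langle y,n\rangle$ for every $y\in S$. Since $S\subseteq Z$ is compact and $x\notin Z$, the quantity $\|x-y\|$ is bounded below by $d(x,Z)>0$ uniformly in $y\in S$; this uniform bound lets me differentiate under the integral sign to obtain
\[ Df(x)(n)=\int_{\R^n}\left\langle \frac{x-y}{\|x-y\|},n\right\rangle d\rho(y). \]
Each integrand is positive because $\langle x-y,n\rangle>0$ on $S$, so $Df(x)(n)>0$ and $f$ strictly decreases in the direction $-n$. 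Hence no point outside $Z$ can be a minimizer, and since $f$ is continuous on the compact set $Z$ a minimum exists there and is global.

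The heart of the argument is strict convexity. Exactly as before,
\[ f(tx+(1-t)y)\le \int\bigl(t\|x-w\|+(1-t)\|y-w\|\bigr)d\rho(w)=tf(x)+(1-t)f(y), \]
with equality in the triangle inequality at $w$ only when $x-w$ and $y-w$ are linearly dependent, that is, only when $w$ lies on the line $L$ through $x$ and $y$. To upgrade $\le$ to $<$ for every pair $x\neq y$, I need the ``bad'' set $L$ to satisfy $\rho(L)<1$. This is where the hypothesis enters: since $S$ does not lie on a line, $S\not\subseteq L$, so some support point $p$ lies off the closed set $L$; a small ball about $p$ then avoids $L$ and carries positive $\rho$-mass, giving $\rho(L)<1$. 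Thus the integrand obeys a strict inequality on a set of positive measure, and $f$ is strictly convex. I expect this measure-theoretic step to be the main obstacle: in the finite case a single non-collinear point automatically carries weight $1/k>0$, whereas here I must argue from the definition of support that the complement of every line has positive mass.

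Finally, uniqueness follows from strict convexity verbatim: two distinct minimizers $x,y$ would force the value of $f$ at their midpoint to be strictly below the common minimum, a contradiction. Combining the three steps, $\rho$ has a unique hub, and it lies in $Z$.
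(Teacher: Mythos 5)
Your proof is correct and is exactly the argument the paper has in mind: the paper gives no separate proof of this theorem, stating only that it ``can be proved in essentially the same way'' as Theorem~\ref{exuniq}, and you carry out that program faithfully. The one genuinely new ingredient---that a line through two candidate minimizers has $\rho$-measure strictly less than $1$ because the support is not collinear---is precisely the point that needs checking, and you handle it correctly via the definition of support.
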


With this result we know that reasonable sets in $\R^n$ have unique hubs. For example, let $X$ be a bounded subset of $\R^n$ having positive Lebesgue measure $\lambda(X)$ and define $\rho$ to be normalized Lebesgue measure restricted to $X$. That is, for a measurable subset $E$
\[\rho (E)=\lambda(E \cap X)/\lambda(X).\]
There are only a few regions for which the hub can be exactly determined. For the interior of a rhombus or an ellipse the hub is the center, but for a region as simple as the interior of an isosceles triangle there does not appear to be an exact formula for the hub.

The theorem also applies to bounded curves in the plane for which $\rho$ is normalized arc length of the curve. Again there are very few curves for which the hub can be described exactly.

\subsection*{Hubs on a sphere}
Essential to the proof of uniqueness of a hub for points in $\R^n$ is that the function to be minimized is strictly convex, a property that depends strongly on the Euclidean structure of $\R^n$. Interesting and challenging questions immediately arise in other metric spaces such as the sphere. Given points $\xset$ on the sphere, 
a hub is a point $h$ minimizing the function $f(x)= \sum_i d(x_i,x)$, where $d(x,y)$ is the great circle distance between $x$ and $y$. Hubs \emph{exist} because the function to be minimized is continuous and the sphere is compact, but \emph{uniqueness} may fail.  An easy example is the case of two antipodal points, say the north and south poles; in this case any point on the equator is a hub. 

A subset of the sphere is spherically convex if it contains the geodesics between any two points in the subset, and the (spherical) convex hull of a subset is the smallest convex set containing the subset. Aly, Kay, and Litwhiler \cite{AlyKayLitwhiler79} prove that if $x_1,\ldots,x_k$ lie in an open hemisphere, then the hub (or hubs) must be in the convex hull of the points. One might conjecture in that case that the hub is unique, but even for three points that is not always true. For example, three points equally spaced on the same latitude just above the equator have the property that the points themselves are minima and they are the only minima. A complete description of the minima for three points was given by Cockayne in 1972 \cite{Cockayne72}. It would be interesting to find reasonable assumptions that guarantee unique hubs for $k$ points, as well as to prove existence and uniqueness results for more general probability measures on $S^2$ and on higher dimensional spheres.

\subsection*{Multiple hubs}

As FedEx and other package shipping companies have grown, they have established additional hubs so that packages from Boston to New York, for example, are shipped through an intermediate location on the East Coast rather than through Memphis. Deciding where to put a second or third hub is a ``multiple facility location'' problem in operations research, and there is ongoing interest in such problems. The two hub problem for finite sets in Euclidean space is the following. Given $\xset$ in $\R^n$ and two hubs $u, v \in \R^n$ a package shipped from $x_i$ to $x_j$ goes via the hub that results in the shorter total distance traveled by the package. 
One can prove the existence of minimizing pairs using continuity and compactness arguments, but uniqueness may not hold in the generality of the one hub case because the function $f$ is no longer convex. Explicitly finding solutions, however, appears to be virtually impossible. Consider, for example, the two hub problem for the uniform distribution on $[0,1]$. The function to be minimized is
\[ f(u,v)= \int_0^1 \int_0^1 \min(|x-u|+|y-u|,|x-v|+|y-v|)\, dx\,dy .\]
Numerical optimization gives the optimal locations as approximately $0.29$ and $0.71$. For these hubs the average distance a package travels is $0.39$, a significant decrease from the one hub average, which is $1/2$, and not too more than the average distance between any two points, which is $1/3$.

\end{document}